\newtheorem{thm}{Theorem}[section]
\newtheorem{prop}[thm]{Proposition} 
\newtheorem{lemma}[thm]{Lemma}
\newtheorem{cor}[thm]{Corollary}
\theoremstyle{definition} 
\newtheorem{definition}[thm]{Definition}
\theoremstyle{remark}
\newtheorem{remark}[thm]{Remark}
\newtheorem{example}[thm]{Example}
\newcommand{\from}{\colon}
\newcommand{\defequal}{:=}
\newcommand{\longto}{\longrightarrow}
\newcommand{\cM}{\mathcal{M}}
\newcommand{\cN}{\mathcal{N}}
\newcommand{\cD}{\mathcal{D}}
\newcommand{\cZ}{\mathcal{Z}}
\newcommand{\R}{\mathbb{R}}
\newcommand{\Z}{\mathbb{Z}}
\newcommand{\N}{\mathbb{N}}
\newcommand{\vect}{\mathfrak{X}}
\newcommand{\lie}{\mathcal{L}}
\newcommand{\pdiff}[2]{\frac{\partial #1}{\partial #2}}
\DeclareMathOperator{\im}{im}
\begin{document}
     
\title{Differential graded contact geometry and Jacobi structures}
\author{Rajan Amit Mehta}
\address{Department of Mathematics \& Statistics \\ 44 College Lane \\ Smith College \\ Northampton, MA 01063}
\email{rmehta@smith.edu}

\keywords{Jacobi manifold, contact manifold, differential graded manifold, symplectic manifold, Poisson manifold}
\subjclass[2010]{16E45, 
53D17, 
58A50
}

\begin{abstract}
We study contact structures on nonnegatively-graded manifolds equipped with homological contact vector fields. In the degree $1$ case, we show that there is a one-to-one correspondence between such structures (with fixed contact form) and Jacobi manifolds. This correspondence allows us to reinterpret the Poissonization procedure, taking Jacobi manifolds to Poisson manifolds, as a supergeometric version of symplectization. 
\end{abstract}

\maketitle

\section{Introduction}
A manifold whose algebra of functions is equipped with a \emph{local Lie algebra} structure, in the sense of Kirillov \cite{kirillov}, is called a \emph{Jacobi manifold}. Equivalently, a Jacobi manifold is a manifold equipped with a bivector field $\Lambda$ and a vector field $R$, satisfying the equations \eqref{eqn:jacobi}. The definition of Jacobi manifolds in these terms is due to Lichnerowicz \cite{lich:cr, lich:jacobi}, who viewed it as a ``contravariant generalization of the notion of contact manifold.'' Since Poisson manifolds form the contravariant generalization of the notion of symplectic manifolds, Lichnerowicz' claim may be concisely described by the following analogy:
\begin{equation}\label{analogy}
\mbox{Jacobi : contact :: Poisson : symplectic}
\end{equation}
The following known results provide ways of formalizing this analogy:
\begin{itemize}
	\item There is a \emph{Poissonization} process, taking Jacobi manifolds to Poisson manifolds \cite{lich:jacobi}. This parallels the symplectization process that takes contact manifolds (with contact $1$-form) to symplectic manifolds.
	\item Jacobi manifolds ``integrate'' to \emph{contact groupoids} \cite{ker-sou, cra-zhu}. This parallels the fact that Poisson manifolds integrate to symplectic groupoids \cite{cdw}.
\end{itemize}
Note that these two approaches are ``orthogonal'' to each other, in terms of the analogy \eqref{analogy}. 

Although the latter approach is very interesting and potentially useful in many ways, it also has complications. In correspondence with the right side of analogy \eqref{analogy}, not every Jacobi manifold is integrable. For those that are integrable, one may not have a simpler description of the integration than as a quotient of an infinite-dimensional path space.

The purpose of this paper is to describe another way of connecting Jacobi and contact structures, allowing us to formalize the analogy \eqref{analogy} without the difficulties of integration. Namely, we show that there is a one-to-one correspondence between Jacobi manifolds and degree $1$ contact $\N Q$-manifolds with fixed contact form. This result parallels the well-known correspondence between Poisson manifolds and degree $1$ symplectic $\N Q$-manifolds. We furthermore show that, in this ``supergeometric'' point of view, Poissonization is \emph{the same thing} as symplectization in the $\N Q$ category. In other words, the following diagram commutes:
\begin{equation}\label{diag:poissonization}
	\xymatrix{ \fbox{Jacobi manifolds} \ar@{<->}[r] \ar_{\mbox{Poissonization}}[d] & \fbox{Deg.\ 1 contact $\N Q$-manifolds} \ar^{\mbox{Symplectization}}[d] \\
\fbox{Poisson manifolds} \ar@{<->}[r] & \fbox{Deg.\ 1 symplectic $\N Q$-manifolds}}
\end{equation}

The correspondence between Poisson manifolds and degree $1$ symplectic $\N Q$-manifolds has led to interesting results relating to Poisson reduction \cite{cz:reduction, mehta:homotopypoisson}. It also clarifies the relation between integration and quantization \cite{catt-feld:quantsympl}, via the AKSZ formalism \cite{aksz}. We believe that the correspondence between Jacobi manifolds and degree $1$ contact $\N Q$-manifolds should lead to analogous results. We plan to explore these ideas elsewhere.

Although the emphasis of this paper is on the degree $1$ case, we develop much of the general theory of contact $\N Q$-manifolds in arbitary degree. We remark that the degree $2$ case should provide a natural generalization of Courant algebroids, together with a ``Courantization'' process. This approach may be useful in studying Jacobi-Dirac and generalized contact structures \cite{wade:conformal, ponte-wade, poon-wade}.

The existence of a correspondence between Jacobi manifolds and degree $1$ contact $\N Q$-manifolds was known by \v{S}evera, who mentioned it in a footnote of \cite{sev}, but did not provide any details. More recently, Antunes and Laurent-Gengoux \cite{ant-lau} studied Jacobi bialgebroid structures from the supergeometric point of view. There are certainly relations between their results and ours, but neither is a special case of the other. Additionally, contact structures on supermanifolds were considered by Bruce \cite{bruce:contact, bruce:quasiq}. His papers played a role in inspiring the author to consider contact $\N Q$-manifolds. 

Shortly after a preprint version of this paper appeared on the arXiv, a preprint by Grabowski \cite{grabowski:gradedcontact} appeared, covering similar material, but with emphasis on the degree $2$ case. His work partially fulfills the above suggestion of a natural generalization of Courant algebroids. However, we should emphasize that there is an important distinction between his approach and ours. He identifies a graded contact manifold with its symplectization (which is larger but carries an $\R^\times$-action), and he develops the theory completely in terms of this identification. On the other hand, we show (Corollary \ref{cor:correspondence}) that a contact $\N$-manifold of degree $n>0$ (with fixed contact form) can be associated to a symplectic $\N$-manifold that is one dimension \emph{smaller} in degree $n$, and our results are stated in terms of this correspondence. The processes of reduction (by the $\R^\times$-action) and symplectization will allow one to translate between Grabowski's framework and ours.

\subsection{Conventions}
There are now many good introductions to the theory of $\Z$- and $\N$-graded manifolds, including  \cite{mehta:thesis,voronov:graded,roytenberg:graded,catt-schatz:super} (although, in contrast to \cite{voronov:graded}, we adopt a definition for which a function's parity agrees with its weight or degree). Roytenberg's paper \cite{roytenberg:graded} is particularly relevant, since it contains the details of the correspondence between Poisson manifolds and degree $1$ symplectic $\N Q$-manifolds, which plays an important role in motivating this paper. We will freely use his results on symplectic $\N$-manifolds without making explicit reference.

There are many possible sign conventions for the calculus of differential forms on a graded manifold. We will use the conventions of \cite{mehta:thesis, mehta:qalg}, where the algebra $\Omega(\cM)$ of differential forms on a graded manifold $\cM$ consists, by definition, of polynomial functions on $T[1]\cM$. Thus, the algebra $\Omega(\cM)$ is graded-commutative with respect to the total grading (i.e.\ the sum of the ``form'' grading and the internal ``manifold'' grading). When we say that a $p$-form is of degree $k$, we mean that the manifold grading is $k$.

With this choice of sign convention, the Cartan commutation relations include the following identities for any homogeneous vector fields $X,Y$ on $\cM$:
\begin{align*}
\lie_X &= [\iota_X, d] = \iota_X d + (-1)^{|X|}d \iota_X, \\
\iota_{[X,Y]} &= [\lie_X, \iota_Y] = \lie_X \iota_Y - (-1)^{|X|(|Y|-1)} \iota_Y \lie_X, \\
\iota_X \iota_Y &= (-1)^{(|X|-1)(|Y|-1)} \iota_Y \iota_X.
\end{align*}

On graded symplectic manifolds, we take Hamiltonian vector fields to be defined by the equation $df = (-1)^{|X|-1} \iota_X \omega$. Note that, if the degree of the symplectic form $\omega$ is $n$, then $|X| = |f| - n$. Poisson brackets are given by $\{f, g\} = X(g) = (-1)^{|Y| - 1}\iota_X \iota_Y \omega$, where $X$ and $Y$ are the Hamiltonian vector fields assocated to $f$ and $g$, respectively. The reader may verify that this convention gives the correct skew-commutativity rule for a degree $-n$ Lie bracket.

\subsection*{Acknowledgements} The author would like to thank Beno\^it Jubin for carefully reading a draft of the paper and for many interesting discussions on related topics. He would also like to thank the anonymous referees for suggestions that greatly improved both the mathematical content and the readability of the paper.

\section{Contact $N$-manifolds}\label{sec:contact}
In this section, we give the definition and some basic properties of degree $n$ contact $\N$-manifolds. Most of the results are straightforward extensions of well-known results from ordinary contact geometry. There are two features that are unique to the graded case. The first is the appearance of the Euler vector field, which automatically preserves the contact structure. The second is the fact (see Theorem \ref{thm:split}) that, when $n>0$, a degree $n$ contact $\N$-manifold with contact form naturally splits as the product of $\R[n]$ and a degree $n$ symplectic $\N$-manifold. This splitting gives a one-to-one correspondence between contact $\N$-manifolds with fixed contact form and symplectic $\N$-manifolds of degree $n>0$. At the end of the section, we discuss the noncoorientable case.

\subsection{Definition}
Let $\cM$ be an $\N$-graded manifold (or $\N$-manifold, for short), and let $\alpha$ be a nowhere-vanishing $1$-form of degree $n$ on $\cM$. The assignment $X \mapsto \iota_X \alpha$ is (left) $C^\infty(\cM)$-linear and so defines a degree $-n$ bundle map
\begin{equation*}
 \iota_\cdot \alpha \from T\cM \to \cM \times \R.
 \end{equation*}
The kernel $\cD \defequal \ker \iota_\cdot \alpha$ is a distribution of corank $1$ concentrated in degree $n$, so $L \defequal T\cM / \cD$ is a line bundle concentrated in degree $n$. Thus we have the  dual (up to grading shift) short exact sequences
\begin{align}
\cD &\longto T\cM \longto L, \label{eqn:sequence1} \\
\langle \alpha \rangle &\longto T^*\cM \longto \cD^*. \label{eqn:sequence2}
\end{align}
The assignment $X \mapsto \iota_X d \alpha$ is also $C^\infty(\cM)$-linear and so defines a degree $-n$ bundle map  
\begin{equation*}
(d\alpha)^\flat: T\cM \to T^*\cM.
\end{equation*}
We say that $\alpha$ is a \emph{contact $1$-form} if $(d\alpha)^\flat$ induces a bijection from $\cD$ to $\cD^*$, giving a nondegenerate pairing on $\cD$. 

The following statements are direct consequences of the definition.
\begin{lemma}\label{lemma:decomp}
     Let $\alpha$ be a contact $1$-form. Then
	\begin{enumerate}
		\item The cotangent bundle of $\cM$ splits as $T^* \cM = \im (d\alpha)^\flat \oplus \langle \alpha \rangle$, and
		\item the degree $n$ map 
		\begin{align*}
		\vect(\cM) &\to \Gamma(\im(d\alpha)^\flat) \oplus C^\infty(\cM) \\
		X &\mapsto (\iota_X d\alpha, \iota_X \alpha)
		\end{align*}
		 is an isomorphism of left $C^\infty(\cM)$-modules.
	\end{enumerate}
\end{lemma}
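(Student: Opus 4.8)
The plan is to construct the graded analogue of the Reeb vector field and then deduce both parts from a splitting $T\cM = \cD \oplus \ker(d\alpha)^\flat$.

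I would first record the parts that are formal consequences of the exact sequences \eqref{eqn:sequence1}--\eqref{eqn:sequence2}. Sequence \eqref{eqn:sequence2} identifies $\cD^*$ with $T^*\cM/\langle\alpha\rangle$, and the composite of $\cD \hookrightarrow T\cM$, $(d\alpha)^\flat$, and $T^*\cM \to \cD^*$ is the bijection from the definition; hence $(d\alpha)^\flat(\cD)$ is a subbundle mapping isomorphically onto $\cD^*$, so $(d\alpha)^\flat(\cD) + \langle\alpha\rangle = T^*\cM$. This sum is direct: if $\iota_Z d\alpha = c\alpha$ with $Z \in \Gamma(\cD)$, then for every $W \in \Gamma(\cD)$ the Cartan relations give $\pm\,\iota_Z\iota_W d\alpha = \iota_W\iota_Z d\alpha = \iota_W(c\alpha) = \pm\, c\,\iota_W\alpha = 0$, so $Z$ lies in the radical of the nondegenerate pairing that $d\alpha$ induces on $\cD$, hence $Z = 0$ and $c = 0$. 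The same computation shows $\ker(d\alpha)^\flat \cap \cD = 0$, so $\ker(d\alpha)^\flat$ injects into the line bundle $T\cM/\cD$ and has rank at most one.

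The one step that requires actual care is to show that $(d\alpha)^\flat$ is \emph{nowhere} an isomorphism, so that $\ker(d\alpha)^\flat$ has rank exactly one. If $(d\alpha)^\flat$ were an isomorphism at a point $p$, then $\alpha_p = \iota_u\, d\alpha|_p$ for a unique nonzero $u \in T_p\cM$; comparing degrees of $\iota_u d\alpha$ and $\alpha$ (both of degree $n$) forces $u$ to have even degree, so the Cartan identity $\iota_u\iota_u = (-1)^{(|u|-1)^2}\iota_u\iota_u$ gives $\iota_u\iota_u\, d\alpha = 0$, whence $\iota_u\alpha|_p = \iota_u(\iota_u\, d\alpha)|_p = 0$ and $u \in \cD_p$. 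Then $\alpha_p = (d\alpha)^\flat_p(u) \in (d\alpha)^\flat_p(\cD_p)$, which meets $\langle\alpha_p\rangle$ only in $0$ by (the pointwise form of) the previous paragraph; hence $\alpha_p = 0$, contradicting that $\alpha$ is nowhere vanishing. Therefore $\ker(d\alpha)^\flat$ is a rank-one subbundle and $T\cM = \cD \oplus \ker(d\alpha)^\flat$. This is the place where the graded signs and the degree count must be handled correctly, and I expect it to be the only real obstacle.

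Part (1) then follows immediately, since $(d\alpha)^\flat$ kills $\ker(d\alpha)^\flat$ and therefore $\im(d\alpha)^\flat = (d\alpha)^\flat(\cD)$, which we have shown is complementary to $\langle\alpha\rangle$. For part (2), the displayed map is manifestly a homomorphism of left $C^\infty(\cM)$-modules into $\Gamma(\im(d\alpha)^\flat)\oplus C^\infty(\cM)$; it is injective because $\iota_X\alpha = 0$ puts $X$ in $\Gamma(\cD)$, where $\iota_X d\alpha = 0$ forces $X = 0$. For surjectivity I would first produce the Reeb section $R \in \Gamma(\ker(d\alpha)^\flat)$ with $\iota_R\alpha = 1$ --- possible because $\iota_\cdot\alpha$ restricts to an injective, hence bijective, map of line bundles $\ker(d\alpha)^\flat \to \cM\times\R$ --- noting that $\iota_R d\alpha = 0$. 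Then, given $\beta \in \Gamma(\im(d\alpha)^\flat)$ and $f \in C^\infty(\cM)$, I pick $Y$ with $\iota_Y d\alpha = \beta$ (using that $(d\alpha)^\flat$ restricts to a bundle isomorphism $\cD \to \im(d\alpha)^\flat$) and set $X = Y + (f - \iota_Y\alpha)R$, so that $\iota_X d\alpha = \beta$ and $\iota_X\alpha = f$. A bijective homomorphism of modules is an isomorphism, which completes (2); everything outside the third paragraph is routine manipulation with the Cartan calculus and the sequences \eqref{eqn:sequence1}--\eqref{eqn:sequence2}.
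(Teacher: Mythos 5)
Your proof is correct. The paper gives no proof of this lemma, asserting only that it is a ``direct consequence of the definition,'' and your argument supplies those details accurately: the one genuinely non-formal step --- showing that $\ker(d\alpha)^\flat$ is a rank-one complement to $\cD$, which is what produces the Reeb section and hence surjectivity in part (2) --- is handled correctly by your degree count (forcing $|u|=0$ and hence $\iota_u\iota_u=0$), and could equivalently be obtained from the dimension constraint $\dim_n \cM = \dim_0 \cM + 1$ recorded in the Remark following the lemma, which shows $(d\alpha)^\flat$ cannot be injective in degree $n$.
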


\begin{remark}
The nondegeneracy requirement imposes the same restrictions on the rank of $\cD$ that one sees on the dimensions of degree $n$ symplectic $\N$-manifolds; namely, the rank of $\cD$ in dimension $i$ equals the rank of $\cD$ in dimension $n-i$. Letting $\dim_i$ denote the dimension of $\cM$ in degree $i$, we deduce that $\dim_n \cM = \dim_0 \cM + 1$ and $\dim_i \cM = \dim_{n-i} \cM$ for $i > 0$. In particular, if an $\N$-manifold $\cM$ admits a degree $1$ contact $1$-form, then $\cM$ is necessarily concentrated in degrees $0$ and $1$, with $\dim_1 \cM = \dim_0 \cM + 1$.
\end{remark}

\begin{definition}
A \emph{contact $\N$-manifold of degree $n$} is an $\N$-manifold $\cM$ equipped with a distribution $\cD \subseteq T\cM$ that is locally the kernel of a contact $1$-form. 
\end{definition}
In general, a contact $1$-form associated to $\cD$ may only exist locally and is only well-defined up to multiplication by a nonvanishing degree $0$ function. If a contact $1$-form exists globally, then the contact structure $\cD$ is called \emph{coorientable}. From the short exact sequences \eqref{eqn:sequence1}--\eqref{eqn:sequence2}, we can see that $\cD$ is coorientable if and only if the degree $n$ line bundle $L$ is trivializable, and a choice of (local) contact $1$-form is equivalent to a choice of (local) trivialization of $L$.

\subsection{Contact vector fields}\label{sec:contactvf}
Let $(\cM, \cD)$ be a degree $n$ contact $\N$-manifold. A vector field $X \in \vect(\cM)$ is \emph{contact} if $[X,\Gamma(\cD)] \subseteq \Gamma(\cD)$. In terms of a contact $1$-form $\alpha$, contact vector fields are characterized by the property that there exists an $f \in C^\infty(\cM)$ such that
\begin{equation}\label{eqn:contactvf}
\lie_X \alpha = (-1)^{|X|} f \alpha.
\end{equation}
The sign in \eqref{eqn:contactvf} is only there to simplify the signs in later formulae.

In the remainder of \S\ref{sec:contactvf}, we will assume that $(\cM,\cD)$ is coorientable (or that we are working locally), and that we have fixed a choice of contact $1$-form $\alpha$.

We will now describe the contact analogue of Hamiltonian vector fields.  Let $h$ be a (homogeneous) function on $\cM$. Then, by Lemma \ref{lemma:decomp}, we may uniquely write 
\begin{equation}\label{eqn:dho}
dh = \beta + f \alpha,  
\end{equation}
where $\beta \in \im(d\alpha)^\flat$ and $f \in C^\infty(\cM)$. Again by Lemma \ref{lemma:decomp}, there exists a unique vector field $X_h$ such that
\begin{align}
	\iota_{X_h} d\alpha &= (-1)^{|h|-n+1} \beta, & \iota_{X_h} \alpha &= h. \label{eqn:ham}
\end{align}
In this case, we have that $|X_h| = |h| - n$. Then 
\begin{equation*}
	\begin{split}
		\lie_{X_h} \alpha &= \iota_{X_h} d \alpha + (-1)^{|X_h|}d \iota_X \alpha \\
     &= (-1)^{|X_h|+1}\beta + (-1)^{|X_h|}dh\\
     &= (-1)^{|X_h|} f \alpha,
	\end{split}
\end{equation*}
so $X_h$ is contact.

The process of taking functions to contact vector fields is invertible; given a contact vector field $X$, one can recover a function $h$ via \eqref{eqn:ham}, and the contact vector field associated to $h$ is again $X$. In summary, we have the following:
\begin{prop}
	\label{prop:contactvf}
There is a one-to-one correspondence between functions and contact vector fields on $\cM$. Functions of degree $k$ correspond to contact vector fields of degree $k - n$.
\end{prop}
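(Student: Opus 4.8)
The plan is to make explicit the inverse of the map $h \mapsto X_h$ constructed just before the proposition, thereby establishing the claimed bijection. The forward direction is already done in the excerpt: formula \eqref{eqn:ham} assigns to each homogeneous $h$ a contact vector field $X_h$ with $|X_h| = |h| - n$. So what remains is to show that this assignment is invertible, and the natural candidate for the inverse is $X \mapsto \iota_X \alpha$.

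First I would check that $\iota_X\alpha$ is a well-defined function for any contact vector field $X$ — this is immediate since $\alpha$ is a $1$-form. Then I would verify the two round-trip identities. For one direction, starting from a function $h$ and forming $X_h$ via \eqref{eqn:ham}, the second equation of \eqref{eqn:ham} gives $\iota_{X_h}\alpha = h$ directly, so $h \mapsto X_h \mapsto \iota_{X_h}\alpha = h$ is the identity. For the other direction, I start with an arbitrary contact vector field $X$, set $h := \iota_X \alpha$, and must show $X_h = X$. Using the decomposition of Lemma \ref{lemma:decomp}(2), a vector field is determined by the pair $(\iota_X d\alpha, \iota_X \alpha)$, so it suffices to check that $X$ and $X_h$ have the same image under that map. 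They agree in the $C^\infty(\cM)$-component by construction. For the $\im(d\alpha)^\flat$-component, I would use the contact condition \eqref{eqn:contactvf}: since $\lie_X \alpha = (-1)^{|X|} f\alpha$ for some $f$, the Cartan formula $\lie_X\alpha = \iota_X d\alpha + (-1)^{|X|} d\iota_X\alpha = \iota_X d\alpha + (-1)^{|X|} dh$ yields $\iota_X d\alpha = (-1)^{|X|}(f\alpha - dh)$. Comparing with the decomposition $dh = \beta + f'\alpha$ of \eqref{eqn:dho} shows $\iota_X d\alpha = (-1)^{|X|+1}\beta$ (and incidentally $f = f'$), which matches the first equation of \eqref{eqn:ham} after substituting $|X| = |h| - n$. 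Hence $X_h = X$.

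Finally, the degree statement follows from the observation already recorded in the excerpt that $|X_h| = |h| - n$: a function of degree $k$ produces a contact vector field of degree $k - n$, and conversely. I would also note that the whole argument is $C^\infty(\cM)$-linear in the appropriate (left-module) sense, so the correspondence is compatible with the module structures, though the proposition as stated only claims a bijection.

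The main obstacle — really the only subtlety — is the bookkeeping of signs and degree shifts when comparing $\iota_X d\alpha$ coming from the contact condition with the normalization chosen in \eqref{eqn:ham}; one has to be careful that the factor $(-1)^{|h|-n+1}$ in \eqref{eqn:ham} is exactly $(-1)^{|X|+1}$ under $|X| = |h| - n$, and that the function $f$ appearing in \eqref{eqn:contactvf} coincides with the $f$ appearing in \eqref{eqn:dho}. Everything else is a direct application of Lemma \ref{lemma:decomp} and the Cartan calculus.
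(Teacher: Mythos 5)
Your proposal is correct and follows essentially the same route as the paper: the paper constructs $X_h$ via \eqref{eqn:ham} using Lemma \ref{lemma:decomp} and simply asserts that the process is invertible by recovering $h = \iota_X\alpha$, while you spell out the verification (including the sign check that $\iota_X d\alpha = (-1)^{|X|+1}\beta$ matches the normalization $(-1)^{|h|-n+1}$ in \eqref{eqn:ham}), which is exactly the computation the paper leaves implicit.
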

\begin{example}
	The \emph{Reeb vector field} $\rho$ is the degree $-n$ vector field defined by the equations $\iota_\rho d\alpha = 0$ and $\iota_\rho \alpha = 1$. Under the correspondence of Proposition \ref{prop:contactvf}, the Reeb vector field correponds to the constant function $1$.
\end{example}

The Reeb vector field allows us to explicitly perform the decomposition \eqref{eqn:dho}. Note that, if $\beta \in \im(d\alpha)^\flat$, then $\iota_\rho \beta = 0$. Thus, for any $h \in C^\infty(\cM)$, with $\beta$ and $f$ given by \eqref{eqn:dho}, we have that
\begin{equation}\label{eqn:f}
 \rho(h) = \iota_\rho dh = \iota_\rho f\alpha = (-1)^{(n-1)|f|} f.
\end{equation}
This allows us to solve for $\beta$:
\begin{equation}\label{eqn:beta}
 \beta = dh - (-1)^{(n-1)|h|} \rho(h) \alpha.
\end{equation}

\begin{example}
	The \emph{Euler vector field} $\varepsilon$ is the degree $0$ vector field given by $\varepsilon(f) = |f|f$ for any homogeneous function $f$. Since $\alpha$ is of degree $n$, we have that $\lie_\varepsilon \alpha = n\alpha$, so the Euler vector field is contact. Let $\theta \defequal \iota_\varepsilon \alpha$ be called the \emph{Euler function} of $(\cM, \alpha)$. The degree of $\theta$ is $n$.
\end{example}
\begin{lemma}\label{lemma:reebeuler}
	The Reeb vector field and the Euler function satisfy the equation $\rho(\theta) = n$.
\end{lemma}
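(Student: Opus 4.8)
The plan is to unwind $\rho(\theta)$ using the Cartan calculus and reduce everything to the two defining properties of the Reeb vector field, $\iota_\rho d\alpha = 0$ and $\iota_\rho\alpha = 1$. First I would write $\rho(\theta) = \iota_\rho d\theta = \iota_\rho d\iota_\varepsilon\alpha$. Since $\varepsilon$ has degree $0$, the first Cartan identity $\lie_X = \iota_X d + (-1)^{|X|}d\iota_X$ applied to $X = \varepsilon$ gives $\lie_\varepsilon\alpha = \iota_\varepsilon d\alpha + d\iota_\varepsilon\alpha$, so that $d\theta = d\iota_\varepsilon\alpha = \lie_\varepsilon\alpha - \iota_\varepsilon d\alpha$. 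By the computation in the Euler vector field example, $\lie_\varepsilon\alpha = n\alpha$ because $\alpha$ has degree $n$, hence $d\theta = n\alpha - \iota_\varepsilon d\alpha$.

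Next I would contract with $\rho$ and use $\iota_\rho\alpha = 1$ to get $\rho(\theta) = n\iota_\rho\alpha - \iota_\rho\iota_\varepsilon d\alpha = n - \iota_\rho\iota_\varepsilon d\alpha$. It then remains only to show that the term $\iota_\rho\iota_\varepsilon d\alpha$ vanishes. For this I would invoke the Cartan commutation relation $\iota_X\iota_Y = (-1)^{(|X|-1)(|Y|-1)}\iota_Y\iota_X$ with $X = \rho$ (degree $-n$) and $Y = \varepsilon$ (degree $0$) to reorder the two interior products; the resulting sign $(-1)^{n+1}$ is immaterial, since $\iota_\rho d\alpha = 0$ by the definition of the Reeb vector field forces $\iota_\rho\iota_\varepsilon d\alpha = (-1)^{n+1}\iota_\varepsilon\iota_\rho d\alpha = 0$. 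Therefore $\rho(\theta) = n$, as claimed.

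There is no substantive obstacle here; the only point requiring care is the bookkeeping of degrees and signs in the Cartan identities — in particular, that $|\varepsilon| = 0$, so no sign appears when applying the magic formula to $\iota_\varepsilon\alpha$, and that the potentially awkward term $\iota_\rho\iota_\varepsilon d\alpha$ dies regardless of its sign thanks to $\iota_\rho d\alpha = 0$.
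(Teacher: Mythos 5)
Your proof is correct and follows essentially the same route as the paper: both expand $\rho(\theta)$ via the Cartan magic formula applied to $\iota_\varepsilon\alpha$, use $\lie_\varepsilon\alpha = n\alpha$ for the main term, and kill the remaining term $\iota_\rho\iota_\varepsilon d\alpha$ using $\iota_\rho d\alpha = 0$. Your explicit handling of the interior-product commutation sign is slightly more careful than the paper's one-line dismissal, but the argument is the same.
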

\begin{proof}
	Using the definition of $\theta$, we have that
\begin{equation*}
	\begin{split}
		\rho(\theta) &= \lie_\rho \iota_\varepsilon \alpha \\
	       &= \iota_\rho \lie_\varepsilon \alpha + \iota_\rho \iota_\varepsilon d \alpha.
	\end{split}
\end{equation*}
The latter term vanishes because $\iota_\rho d\alpha = 0$, and the first term is $n \iota_\rho \alpha = n$.
\end{proof}

\begin{remark}\label{rmk:rhotheta}
It should be emphasized that the correspondence of Proposition 2.4 depends on the choice of $\alpha$. In particular, the Euler function $\theta$ and the Reeb vector field $\rho$ both depend on $\alpha$. If $\alpha' = f\alpha$ for a nonvanishing degree $0$ function $f$, then the corresponding Reeb vector field $\rho'$ is equal to the contact vector field $X_{1/f}$. The corresponding Euler function $\theta'$ is $f\theta$.
\end{remark}

\subsection{The structure of contact $\N$-manifolds}\label{sec:structure}
Let $(\cM,\cD)$ be a degree $n$ contact $\N$-manifold where $n > 0$. We first consider the coorientable case. In this case, fix a contact $1$-form $\alpha$, let $\rho$ be the associated Reeb vector field, and let $\theta$ be the associated Euler function.

Although odd vector fields aren't automatically integrable, we see that $[\rho,\rho]$ vanishes for any $n$, since there are no nontrivial degree $-2n$ vector fields on $\cM$. Thus $\rho$ is integrable, and since it is nonvanishing (it satisfies $\iota_\rho \alpha = 1$), it induces a free $\R[n]$-action on $\cM$. An important feature that distinguishes the $n>0$ case from ordinary contact geometry is that the Reeb orbits cannot contain any topological complexities, such as being dense or incomplete. The reason is simply that $\R[n]$ is not really a line; rather, it is a sheaf over a single point. 

Let $\lambda \defequal \alpha - \frac{1}{n} d\theta$ and $\omega \defequal d\lambda = d\alpha$.

\begin{thm}\label{thm:split}
Let $(\cM, \alpha)$ be an $\N$-manifold equipped with a degree $n$ contact $1$-form for $n>0$. Then $\cM$ is the total space of a principal $\R[n]$-bundle with a canonical trivialization. The $1$-form $\lambda$ is basic, and $\omega$ passes to a degree $n$ symplectic form on the quotient.
\end{thm}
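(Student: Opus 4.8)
The plan is to verify the three assertions in order, using the Reeb vector field $\rho$ as the generator of the $\R[n]$-action and the formulae \eqref{eqn:f}--\eqref{eqn:beta} to control how $\lambda$ and $\omega$ interact with $\rho$. First, the principal bundle structure: we already observed that $\rho$ is integrable (since $[\rho,\rho]=0$ for degree reasons) and nonvanishing, so it generates a free $\R[n]$-action on $\cM$; we must check that this action is principal, i.e.\ that the quotient $\cN \defequal \cM/\R[n]$ is again an $\N$-manifold and $\cM \to \cN$ is a surjective submersion. The canonical trivialization $\cM \iso \cN \times \R[n]$ should come from the Euler function: I would use $\frac{1}{n}\theta$ (or a function built from it) as a global fiber coordinate, since Lemma \ref{lemma:reebeuler} gives $\rho(\theta)=n$, so $\rho\bigl(\tfrac{1}{n}\theta\bigr)=1$, meaning $\tfrac{1}{n}\theta$ is a coordinate along the Reeb direction transforming correctly under the flow. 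Concretely, one writes down a projection killing $\theta$ and checks it is $\rho$-invariant.

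Second, that $\lambda = \alpha - \tfrac{1}{n}d\theta$ is basic. A $1$-form is basic for the $\rho$-action precisely when $\iota_\rho \lambda = 0$ and $\lie_\rho \lambda = 0$. For the first: $\iota_\rho \alpha = 1$ by definition of the Reeb field, and $\iota_\rho d\theta = \rho(\theta) = n$ by Lemma \ref{lemma:reebeuler}, so $\iota_\rho \lambda = 1 - \tfrac{1}{n}\cdot n = 0$. For the second: $\lie_\rho \alpha = \iota_\rho d\alpha + (-1)^{|\rho|}d\iota_\rho \alpha = 0 + (-1)^{-n} d(1) = 0$, and $\lie_\rho d\theta = d\lie_\rho\theta = d(\rho(\theta)) = d(n) = 0$, so $\lie_\rho \lambda = 0$. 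Hence $\lambda$ descends to a $1$-form on $\cN$. Since $\omega = d\lambda$, it too is basic (being $d$ of a basic form, or directly: $\iota_\rho\omega = \iota_\rho d\alpha = 0$ and $\lie_\rho\omega = d\iota_\rho\omega = 0$), so $\omega$ descends to a closed degree $n$ $2$-form $\bar\omega$ on $\cN$.

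Third, that $\bar\omega$ is symplectic, i.e.\ nondegenerate on $\cN$. Here I would pull back and compare: nondegeneracy of $\bar\omega$ on $T\cN$ is equivalent to the statement that $(d\alpha)^\flat \from T\cM \to T^*\cM$ has kernel exactly spanned by $\rho$ and image a complement to $\langle\alpha\rangle$ (so that, after quotienting $T\cM$ by $\langle\rho\rangle$ and $T^*\cM$ by $\langle\alpha\rangle$, one gets an isomorphism). But this is exactly the content of the contact condition together with Lemma \ref{lemma:decomp}: the contact hypothesis says $(d\alpha)^\flat$ restricts to a bijection $\cD \to \cD^*$, and Lemma \ref{lemma:decomp}(1) gives $T^*\cM = \im(d\alpha)^\flat \oplus \langle\alpha\rangle$ with $\ker (d\alpha)^\flat = \langle\rho\rangle$ (since $\rho$ spans a complement to $\cD$ in $T\cM$ and $\iota_\rho d\alpha = 0$). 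Identifying $T\cN$ with $\cD$ (or with $T\cM/\langle\rho\rangle$) and $T^*\cN$ with $\cD^*$ along the projection, the induced map $\bar\omega^\flat$ is precisely the bijection $\cD \to \cD^*$, so $\bar\omega$ is nondegenerate.

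The main obstacle is the first step: making precise that the $\R[n]$-action is principal and producing the canonical trivialization, which in the graded setting requires a bit of care about what the quotient $\N$-manifold is and why the Euler function furnishes a global splitting coordinate. Once that bookkeeping is set up, the "basic" and "symplectic" claims are short computations with the Cartan calculus and a direct appeal to Lemma \ref{lemma:decomp}, as above.
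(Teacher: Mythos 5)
Your proposal is correct and follows essentially the same route as the paper: the Reeb field generates the free $\R[n]$-action, the Euler function $\theta$ furnishes the trivialization via $\rho(\theta)=n$, the Cartan-calculus computation shows $\lambda$ is basic, and nondegeneracy of the descended form comes from the fact that $\rho$ spans the kernel (characteristic distribution) of $d\alpha$. The only difference is that you spell out the intermediate computations in more detail than the paper does.
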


\begin{proof}
Let $\cN$ be the quotient of $\cM$ by the action of the Reeb vector field (which is a free $\R[n]$ action). The graded algebra of functions on $\cN$ consists of those functions $f$ on $\cM$ for which $\rho(f) = 0$. On the other hand, the function $\theta$ determines a projection map $\cM \to \R[n]$ that trivializes the principal $\R[n]$-bundle $\cM \to \cN$.

Using Lemma \ref{lemma:reebeuler}, we see that $\iota_\rho \lambda = 0$ and $\lie_\rho \lambda = 0$, so $\lambda$ is basic. Nondegeneracy of the push-forward of $\omega$ to $\cN$ follows from the fact that $\rho$ spans the characteristic distribution for the presymplectic form $d\alpha$.
\end{proof}

In Theorem \ref{thm:split}, the contact form on $\cM$ can be recovered from the symplectic form $\omega$ on $\cN$, since 
\begin{align}\label{eqn:alphaomega}
\lambda = \frac{1}{n}\iota_\varepsilon \omega	&& \mbox{and} && \alpha = \lambda + \frac{1}{n}d\theta.
\end{align}
Furthermore, given any degree $n$ symplectic $\N$-manifold $(\cN, \omega)$ for $n>0$, equations \eqref{eqn:alphaomega} define a degree $n$ contact form on $\cN \times \R[n]$. We therefore have the following result:

\begin{cor}\label{cor:correspondence}
When $n>0$, there is a one-to-one correspondence between degree $n$ symplectic $\N$-manifolds and degree $n$ contact $\N$-manifolds with fixed contact form.
\end{cor}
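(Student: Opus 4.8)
The plan is to assemble the correspondence from Theorem \ref{thm:split} together with the construction recorded in \eqref{eqn:alphaomega}, and then to verify that the two resulting assignments are mutually inverse (up to canonical isomorphism). In one direction we already have what we need: given a degree $n$ contact $\N$-manifold $(\cM,\alpha)$ with fixed contact form, Theorem \ref{thm:split} produces the degree $n$ symplectic $\N$-manifold $(\cN,\omega)$, where $\cN$ is the quotient of $\cM$ by the Reeb flow and $\omega$ is the push-forward of $d\alpha = d\lambda$. The work is in the other direction and in checking that the two constructions cancel.

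For the reverse construction, given a degree $n$ symplectic $\N$-manifold $(\cN,\omega)$ with $n>0$, I would set $\cM \defequal \cN \times \R[n]$, let $\theta$ be the coordinate function on the $\R[n]$ factor (so $\theta$ has degree $n$), and define $\lambda \defequal \tfrac1n \iota_\varepsilon \omega$ and $\alpha \defequal \lambda + \tfrac1n d\theta$ exactly as in \eqref{eqn:alphaomega}, where $\varepsilon$ is the Euler vector field of $\cM$ and $\omega$ denotes its pullback to $\cM$. The point to check is that $\alpha$ is a contact $1$-form of degree $n$. Since $\omega$ is pulled back from $\cN$, the contraction $\iota_\varepsilon\omega$ only sees the $\cN$-component of $\varepsilon$, so $\lambda$ is (the pullback of) a degree $n$ $1$-form on $\cN$; moreover the Cartan formula together with $d\omega = 0$ and $\lie_\varepsilon\omega = n\omega$ gives $d\lambda = \tfrac1n d\iota_\varepsilon\omega = \tfrac1n\lie_\varepsilon\omega = \omega$, hence $d\alpha = \omega$. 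Next, the vertical vector field $\rho$ generating the $\R[n]$ factor, normalized so that $\rho(\theta) = n$, satisfies $\iota_\rho\alpha = 1$ and $\iota_\rho d\alpha = \iota_\rho\omega = 0$, so it is the Reeb vector field of $\alpha$. Decomposing an arbitrary vector field on $\cM$ as a horizontal (pulled-back) part plus a multiple of $\rho$, one sees from \eqref{eqn:sequence1}--\eqref{eqn:sequence2} that $\cD = \ker \iota_\cdot\alpha$ projects isomorphically onto $T\cN$, and that on $\cD$ the pairing induced by $(d\alpha)^\flat = \omega^\flat$ agrees, under this projection, with the one induced by $\omega$ on $T\cN$. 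Nondegeneracy of $\omega$ then yields nondegeneracy on $\cD$, so $\alpha$ is a contact $1$-form.

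Finally I would check the two composites. Starting from $(\cN,\omega)$, forming $(\cN\times\R[n],\alpha)$ and quotienting by the Reeb flow returns $\cN$, with push-forward of $d\alpha = \omega$ equal to $\omega$. Conversely, starting from $(\cM,\alpha)$, Theorem \ref{thm:split} identifies $\cM$ with $\cN \times \R[n]$ via $\theta$, and equations \eqref{eqn:alphaomega} show directly that the contact form produced by the reverse construction applied to $(\cN,\omega)$ is precisely $\alpha$. I expect the one genuinely non-formal step to be the nondegeneracy verification for $\alpha$ on $\cN\times\R[n]$: one must keep careful track of how the Euler vector field of the product splits and of the fact that $\iota_\varepsilon\omega$ is insensitive to the $\R[n]$-direction, so that $d\alpha$ equals the pulled-back symplectic form and its restriction to $\cD$ inherits nondegeneracy. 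Everything else is bookkeeping with Lemma \ref{lemma:decomp}, Lemma \ref{lemma:reebeuler}, and the short exact sequences \eqref{eqn:sequence1}--\eqref{eqn:sequence2}; the hypothesis $n>0$ enters both through the division by $n$ and through the remark, used in the proof of Theorem \ref{thm:split}, that the $\R[n]$-bundle is automatically trivial.
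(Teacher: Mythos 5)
Your proposal is correct and follows the paper's own route exactly: the forward direction is Theorem \ref{thm:split}, and the reverse direction is the construction of $\alpha$ on $\cN \times \R[n]$ via equations \eqref{eqn:alphaomega}, which the paper asserts without detail and you verify explicitly (the computation $d\lambda = \tfrac1n d\iota_\varepsilon\omega = \omega$ and the identification of $\cD$ with $T\cN$ are the right checks). No gaps.
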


\begin{example}\label{example:deg1}
Since every degree $1$ symplectic $\N$-manifold is canonically symplectomorphic to $T^*[1]M$ for some manifold $M$, we have that every degree $1$ contact $\N$-manifold with fixed contact form is canonically of the form $T^*[1]M \times \R[1]$ with the standard contact form $\alpha = \lambda + d\theta$, where $\lambda$ is the Liouville $1$-form on $T^*[1]M$, and $\theta$ is the coordinate function on $\R[1]$. The Reeb vector field is $\rho = \pdiff{}{\theta}$.
\end{example}

In Theorem \ref{thm:split}, the bundle structure itself depends on $\rho$ (which, in turn, depends on $\alpha$), so it isn't possible to use Theorem \ref{thm:split} locally and patch the results to obtain a global result. However, we may take a different approach in the non-coorientable case. It is clear from the way $\theta$ transforms (see Remark \ref{rmk:rhotheta}) that the ideal generated by $\theta$ is independent of the choice of $\alpha$.  The resulting sheaf of ideals may be viewed as a canonical submanifold $\cZ$ of codimension $1$ concentrated in degree $n$. Any choice of local contact form on $\cM$ corresponds to a choice of local symplectic form on $\cZ$, since $\cZ$ is the zero section of the bundle in Theorem \ref{thm:split}. Under a change of contact form $\alpha' = f\alpha$, the corresponding symplectic form on $\cZ$ transforms as 
\begin{equation*}
\omega' = f \omega + \frac{1}{n}df \wedge (\iota_\varepsilon \omega). 
\end{equation*}

\section{Contact $\N Q$-manifolds}

Recall that a \emph{homological vector field} on a graded manifold $\cM$ is a degree $1$ vector field $Q$ such that $Q^2 = 0$. If $\cM$ has a contact structure, then we may consider vector fields that are both contact and homological.
\begin{definition}
	A \emph{degree $n$ contact $\N Q$-manifold} is a degree $n$ contact $\N$-manifold $\cM$, equipped with a vector field $Q$ that is contact and homological.
\end{definition}

\subsection{The case $n=1$}\label{sec:jacobi}
In this section, we describe the correspondence between degree $1$ contact $\N Q$-manifolds and Jacobi manifolds.

Recall from Example \ref{example:deg1} that every degree $1$ contact $\N$-manifold with fixed contact form is canonically of the form $\cM = T^*[1]M \times \R[1]$ for some ordinary manifold $M$. We remind the reader that functions on $T^*[1]M$ can be identified with multivector fields on $M$.

Let us first describe degree $1$ contact vector fields on $\cM$. By Proposition \ref{prop:contactvf}, every degree $1$ contact vector field arises from a degree $2$ function $h$ on $\cM$. Any such function is of the form
\begin{equation*}
	h = \Lambda + \theta R,
\end{equation*}
where $\Lambda$ is a bivector field and $R$ is a vector field on $M$. Following \eqref{eqn:dho}, \eqref{eqn:f}, and \eqref{eqn:beta}, we write
\begin{equation}\label{eqn:dh}
	dh = d\Lambda - \theta dR + R d\theta = d\Lambda - \theta dR - R\lambda + R\alpha,
\end{equation}
where $\beta \defequal d\Lambda - \theta dR - R\lambda$ is in $\im (d\alpha)^\flat$. The corresponding contact vector field $Q$ is defined by the equations \eqref{eqn:ham}, which in this case become
\begin{align}
	\iota_Q \alpha &= \Lambda + \theta R, \label{eqn:iqalpha} \\
     \iota_Q d\alpha &= d\Lambda - \theta dR - R\lambda. \label{eqn:iqdalpha}
\end{align}
The unique solution is
\begin{equation}\label{eqn:q}
	Q = X_\Lambda + \theta X_R - R\varepsilon - (\Lambda + \theta R) \pdiff{}{\theta},
\end{equation}
where $X_\Lambda$ and $X_R$ are the Hamiltonian vector fields on $T^*[1]M$ associated to $\Lambda$ and $R$, respectively. The Hamiltonian vector fields annihilate $\theta$ and act on multivector fields via the Schouten bracket.

The verification of \eqref{eqn:q} is a straightforward exercise, using the definition of Hamiltonian vector fields and \eqref{eqn:alphaomega}. Applying Proposition \ref{prop:contactvf}, we have the following result:
\begin{prop}
	Every degree $1$ contact vector field $Q$ on $\cM$ is of the form \eqref{eqn:q} for some $\Lambda \in \vect^2(M)$ and $R \in \vect^1(M)$.
\end{prop}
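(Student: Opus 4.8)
The plan is to read the proposition directly off Proposition~\ref{prop:contactvf} together with the coordinate description of $\cM$ from Example~\ref{example:deg1}. By Proposition~\ref{prop:contactvf}, degree~$1$ contact vector fields on $\cM$ are in bijection with degree~$2$ functions, the bijection sending $h$ to the vector field $X_h$ determined by \eqref{eqn:ham}. So the first step is to enumerate the degree~$2$ functions: on $\cM = T^*[1]M \times \R[1]$, a function is a polynomial in the odd degree~$1$ coordinate $\theta$ with coefficients that are functions on $T^*[1]M$, i.e.\ multivector fields on $M$ graded by tensor degree. Since $\theta^2 = 0$, a homogeneous degree~$2$ function is uniquely $h = \Lambda + \theta R$ with $\Lambda \in \vect^2(M)$ (the summand not involving $\theta$) and $R \in \vect^1(M)$ (the coefficient of $\theta$, which is forced to have degree~$1$). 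This is exactly the parametrization asserted in the proposition, so it remains only to identify $X_h$ with the vector field \eqref{eqn:q}.

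That identification is the computation already sketched in \eqref{eqn:dh}--\eqref{eqn:q}, which I would carry out as follows. Differentiate $h = \Lambda + \theta R$ by the Leibniz rule, using $d\theta = \alpha - \lambda$ from Example~\ref{example:deg1}, to obtain \eqref{eqn:dh}; this exhibits the $\alpha$-component $f = R$ and puts $\beta = d\Lambda - \theta\,dR - R\lambda$, with $\beta \in \im(d\alpha)^\flat$ checked via $\iota_\rho \beta = 0$ for $\rho = \pdiff{}{\theta}$. Then solve \eqref{eqn:iqalpha}--\eqref{eqn:iqdalpha}: decompose a general degree~$1$ vector field on the product as $Q = Q_0 + g\,\pdiff{}{\theta}$ with $Q_0$ along the $T^*[1]M$ factor, note $d\alpha = \omega$ does not involve $\theta$ so $\iota_Q d\alpha = \iota_{Q_0}\omega$ must equal $\beta$, and note $\iota_Q \alpha$ splits as an $\iota_{Q_0}\lambda$ term plus a multiple of $g$. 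Using the $n=1$ case $\lambda = \iota_\varepsilon \omega$ of \eqref{eqn:alphaomega} together with the defining relations $\iota_{X_\Lambda}\omega = \pm d\Lambda$ and $\iota_{X_R}\omega = \pm dR$ for Hamiltonian vector fields, one sees that $Q_0 = X_\Lambda + \theta X_R - R\varepsilon$ solves the $d\alpha$-equation, and then the $\alpha$-equation forces the $\pdiff{}{\theta}$-component to be $-(\Lambda + \theta R)$, yielding \eqref{eqn:q}. Uniqueness of $Q_0$ and $g$ is immediate from Lemma~\ref{lemma:decomp}.

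The only real difficulty is bookkeeping of Koszul signs, since $\theta$, $R$, and the degree~$1$ Hamiltonian vector fields are all odd: one must be careful with $d(\theta R)$, with $\iota_{X_R}\omega$, and with the action of $\lie_{X_R}$ on $\theta$ and on multivector fields through the Schouten bracket. The term $-R\varepsilon$ in \eqref{eqn:q} is forced precisely because $d\Lambda$ and $dR$ individually fail to lie in $\im(d\alpha)^\flat$, and the key sign check is that its contraction with $d\alpha$ produces exactly the $-R\lambda$ correction appearing in $\beta$. Once the paper's conventions are applied consistently there is no conceptual obstacle; the proposition is a restatement of Proposition~\ref{prop:contactvf} in the coordinates of Example~\ref{example:deg1}.
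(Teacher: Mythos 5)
Your proposal is correct and follows exactly the paper's route: reduce to degree~$2$ functions via Proposition~\ref{prop:contactvf}, write $h = \Lambda + \theta R$, and verify that the associated contact vector field determined by \eqref{eqn:ham} is \eqref{eqn:q} (the paper leaves this last verification as a ``straightforward exercise,'' which you flesh out consistently with its conventions). No substantive difference from the paper's argument.
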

Comparing \eqref{eqn:dh} with the construction of \S\ref{sec:contactvf}, we have that 
\begin{equation}\label{eqn:lieq}
\lie_Q \alpha = -R\alpha.	
\end{equation}
Next, we consider the conditions on $\Lambda$ and $R$ that arise from the requirement $Q^2=0$. 
\begin{prop}\label{prop:qsquared}
	Let $Q$ be a contact vector field of the form \eqref{eqn:q}. Then $Q^2 = 0$ if and only if
\begin{align}\label{eqn:jacobi}
	[\Lambda, \Lambda] = 2R\Lambda && \mbox{and} && [R,\Lambda] = 0.
\end{align}
\end{prop}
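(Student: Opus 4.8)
The plan is to compute $Q^2$ directly from the explicit formula \eqref{eqn:q} and extract the vanishing conditions by grouping terms according to their $\theta$-dependence and their behavior on $T^*[1]M$ versus the $\partial/\partial\theta$ direction. Write $Q = A + \theta B - R\varepsilon - h\,\partial/\partial\theta$, where $A \defequal X_\Lambda$, $B \defequal X_R$, and $h = \Lambda + \theta R$. Since $Q$ is odd, $Q^2 = \frac{1}{2}[Q,Q]$, so I want to show $[Q,Q] = 0$ iff \eqref{eqn:jacobi}. The key structural facts I would use are: $A^2 = \frac{1}{2}[X_\Lambda,X_\Lambda] = \frac{1}{2}X_{[\Lambda,\Lambda]}$ acting on multivector fields as $\frac{1}{2}[[\Lambda,\Lambda],-]$ via Schouten; $[A,B] = X_{[\Lambda,R]} = -X_{[R,\Lambda]}$; $B^2 = \frac{1}{2}X_{[R,R]} = 0$ since $R$ is an ordinary vector field; that $\varepsilon$ commutes with $A$ and $B$ (they are degree-preserving in the appropriate sense — more precisely $[\varepsilon, X_\Lambda]$ and $[\varepsilon, X_R]$ are controlled by the weights of $\Lambda$, $R$) and $[\varepsilon, \partial/\partial\theta] = -\partial/\partial\theta$; and that $A,B$ annihilate $\theta$ while $\varepsilon(\theta) = \theta$, $(\partial/\partial\theta)(\theta) = 1$.

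The main computation is then bookkeeping: expand $[Q,Q]$ into the pairwise brackets of the four summands, keeping careful track of signs (all four pieces are odd, so every bracket is a symmetric/anticommutator-type term). The terms with no $\theta$ and acting purely on $T^*[1]M$ will collect to something proportional to $[\Lambda,\Lambda] - 2R\Lambda$ acting by Schouten bracket — this gives the first equation in \eqref{eqn:jacobi}. The terms linear in $\theta$ acting on $T^*[1]M$ will collect to something proportional to $[R,\Lambda]$, giving the second equation. The remaining terms — those involving $\partial/\partial\theta$, which detect the $\theta$-component of $Q^2$ — should vanish automatically or reduce to the same two conditions; I expect them to be forced to zero once the first two hold, consistent with the fact (from Proposition \ref{prop:contactvf}, via \eqref{eqn:lieq}) that $Q^2$, being a contact vector field of degree $2$, is determined by its contraction data, and that $\iota_{Q^2}\alpha = \frac{1}{2}\iota_{[Q,Q]}\alpha$ is computed from $\lie_Q \iota_Q \alpha = \lie_Q h$.

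A cleaner route to the same end — which I would actually prefer to present — is to bypass the vector-field computation and instead use Proposition \ref{prop:contactvf} together with \eqref{eqn:lieq}. Since $Q^2 = \frac{1}{2}[Q,Q]$ is again a contact vector field (the bracket of contact vector fields is contact), it corresponds under Proposition \ref{prop:contactvf} to the function $\iota_{Q^2}\alpha$. Now $\iota_{Q^2}\alpha = \frac{1}{2}\iota_{[Q,Q]}\alpha = \frac{1}{2}(\lie_Q \iota_Q \alpha - \iota_Q \lie_Q \alpha)$; using $\iota_Q\alpha = h$ and \eqref{eqn:lieq} this becomes $\frac{1}{2}(Q(h) + R\,h) = \frac{1}{2}(Q(h) + Rh)$. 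So $Q^2 = 0$ iff $Q(h) + Rh = 0$ as a function, i.e.\ iff $Q(\Lambda + \theta R) = -R(\Lambda + \theta R)$. Expanding $Q(\Lambda+\theta R)$ using \eqref{eqn:q}: the $\theta$-independent part gives $[\Lambda,\Lambda] - R\Lambda = -R\Lambda$, i.e.\ $[\Lambda,\Lambda] = 2R\Lambda$ — wait, let me be careful: $X_\Lambda(\Lambda) = \frac{1}{2}[\Lambda,\Lambda]$ (with the appropriate factor depending on the Schouten convention) plus the $-h\,\partial/\partial\theta$ and $-R\varepsilon$ contributions, which on $\Lambda$ give a term $-R\Lambda$ (from $-R\varepsilon$, since $\varepsilon(\Lambda) = 2\Lambda$, contributing $-2R\Lambda$, partially cancelled) — the precise coefficients need the conventions pinned down, but matching the $\theta^0$ and $\theta^1$ homogeneous components of the equation $Q(h) = -Rh$ yields exactly the two relations $[\Lambda,\Lambda] = 2R\Lambda$ and $[R,\Lambda] = 0$. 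The main obstacle, as usual in this subject, is purely a matter of getting every sign and numerical factor right in the Schouten-bracket and Euler-field conventions; I expect the second approach to minimize that pain, since it replaces a four-by-four table of vector-field brackets with a single application of Cartan's formula plus one function-level identity.
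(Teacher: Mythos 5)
Your preferred (second) route is exactly the paper's proof: $Q^2=\tfrac12[Q,Q]$ is contact, hence vanishes iff $\iota_{[Q,Q]}\alpha=\lie_Q\iota_Q\alpha-\iota_Q\lie_Q\alpha=Q(h)+Rh$ vanishes, and the paper's computation confirms that the $\theta^0$ and $\theta^1$ components of this function are $[\Lambda,\Lambda]-2R\Lambda$ and $2[R,\Lambda]$, just as you predict. The sign and coefficient bookkeeping you defer does work out with the stated conventions, so the argument is correct and essentially identical to the paper's.
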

\begin{proof}
Contact vector fields are closed under the Lie bracket, so $Q^2 = \frac{1}{2}[Q,Q]$ is contact. By the correspondence of Proposition \ref{prop:contactvf}, we have that $Q^2 = 0$ if and only if $\iota_{[Q,Q]} \alpha = 2\iota_{Q^2} \alpha = 0$. Using \eqref{eqn:iqalpha}, \eqref{eqn:q}, and \eqref{eqn:lieq}, we then compute
\begin{equation*}
	\begin{split}
		\iota_{[Q,Q]} \alpha &= \lie_Q \iota_Q \alpha - \iota_Q \lie_Q \alpha \\
&= \lie_Q (\Lambda + \theta R) + \iota_Q (R\alpha) \\
&= [\Lambda, \Lambda] - 2R\Lambda + 2\theta[R,\Lambda],
	\end{split}
\end{equation*}
which vanishes if and only if the equations \eqref{eqn:jacobi} hold.
\end{proof}

The equations \eqref{eqn:jacobi} are exactly those that define a Jacobi structure on $M$. Thus, we have shown the following:

\begin{thm}\label{thm:jacobicontact}
	There is a one-to-one correspondence between Jacobi manifolds and degree $1$ contact $\N Q$-manifolds with fixed contact form.
\end{thm}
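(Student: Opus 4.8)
The plan is to assemble Theorem~\ref{thm:jacobicontact} directly from the pieces already in place: Corollary~\ref{cor:correspondence} (in the degree $1$ case, via Example~\ref{example:deg1}), the parametrization of degree $1$ contact vector fields in Proposition~2.12, and the computation of the homological condition in Proposition~\ref{prop:qsquared}. Concretely, a degree $1$ contact $\N$-manifold with fixed contact form is canonically $\cM = T^*[1]M \times \R[1]$ for a unique ordinary manifold $M$, and by Proposition~\ref{prop:contactvf} a degree $1$ contact vector field $Q$ is the same datum as a degree $2$ function $h$ on $\cM$, hence (since functions on $T^*[1]M$ are multivector fields on $M$) a pair $(\Lambda, R)$ with $\Lambda \in \vect^2(M)$ and $R \in \vect^1(M)$. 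Proposition~\ref{prop:qsquared} then says that $Q^2 = 0$ holds exactly when $[\Lambda,\Lambda] = 2R\Lambda$ and $[R,\Lambda] = 0$, which are precisely the defining equations of a Jacobi structure on $M$. So the correspondence is $(M, \Lambda, R) \leftrightarrow (T^*[1]M \times \R[1], \alpha_{\mathrm{std}}, Q_{\Lambda,R})$.

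First I would make explicit that this assignment is a genuine bijection in both directions. Given a Jacobi manifold $(M,\Lambda,R)$, form $\cM = T^*[1]M\times\R[1]$ with the standard contact form $\alpha = \lambda + d\theta$ of Example~\ref{example:deg1}, let $h = \Lambda + \theta R$, and let $Q = X_h$ be the associated contact vector field, given by \eqref{eqn:q}; Proposition~\ref{prop:qsquared} guarantees $Q^2 = 0$, so $(\cM,\cD,Q)$ is a degree $1$ contact $\N Q$-manifold with fixed contact form. Conversely, given such a $(\cM,\alpha,Q)$, Corollary~\ref{cor:correspondence}/Example~\ref{example:deg1} identify $\cM$ canonically with $T^*[1]M\times\R[1]$ for a unique $M$, Proposition~\ref{prop:contactvf} produces a unique degree $2$ function $h$ with $Q = X_h$, and writing $h = \Lambda + \theta R$ extracts a unique pair $(\Lambda,R)$; Proposition~\ref{prop:qsquared} applied in reverse forces \eqref{eqn:jacobi}, so $(M,\Lambda,R)$ is a Jacobi manifold. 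These two constructions are mutually inverse because the passage $h \mapsto X_h \mapsto h$ is the identity (Proposition~\ref{prop:contactvf}) and the decomposition $h = \Lambda + \theta R$ is unique.

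I expect the only real subtlety — the ``main obstacle,'' such as it is — to lie in pinning down what a \emph{morphism} should be, if one wants the correspondence to be an equivalence of categories rather than just a bijection on objects; but since the statement as written only asserts a one-to-one correspondence, I would keep the proof at the level of objects and not belabor functoriality. A second minor point worth a sentence is the role of ``fixed contact form'': without fixing $\alpha$, the splitting $\cM = T^*[1]M\times\R[1]$, the function $\theta$, and hence the decomposition $h = \Lambda + \theta R$ are only defined up to the transformations of Remark~\ref{rmk:rhotheta}, so the bijection genuinely requires that rigidification. Given that Propositions~2.12 and~\ref{prop:qsquared} have already done all the computational work, the proof of Theorem~\ref{thm:jacobicontact} is essentially a one-line observation — ``the equations \eqref{eqn:jacobi} are exactly the defining equations of a Jacobi structure'' — together with a remark that both directions of the construction have been exhibited, and that is how I would write it.
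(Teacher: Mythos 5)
Your proposal is correct and follows exactly the paper's route: the paper also deduces the theorem immediately from the canonical identification $\cM \iso T^*[1]M \times \R[1]$ (Example \ref{example:deg1}), the parametrization of degree $1$ contact vector fields by pairs $(\Lambda, R)$ via Proposition \ref{prop:contactvf}, and the computation in Proposition \ref{prop:qsquared}, with the one-line observation that the resulting equations \eqref{eqn:jacobi} are precisely the defining equations of a Jacobi structure. Your added remarks on the mutual inverseness of the two directions and the role of the fixed contact form are consistent with, and slightly more explicit than, the paper's presentation.
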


\begin{remark}
	It is well-known \cite{ker-sou,vaisman} that one can associate to a Jacobi manifold $M$ a Lie algebroid structure on $T^*M \times \R$. The search for a converse result, characterizing those Lie algebroid structures on $T^*M \times \R$ that arise from Jacobi structures, led Iglesias and Marrero \cite{iglesias-marrero} and Grabowski and Marmo \cite{grab-marmo} to define the notion of \emph{Jacobi bialgebroids}. 

We can interpret Theorem \ref{thm:jacobicontact} as giving a more direct answer to the same question. Namely, the Lie algebroid structures on $T^*M \times \R$ that arise from Jacobi structures are exactly those for which the Lie algebroid differential is a contact vector field.
\end{remark}

\begin{remark}
A key point that is implicit in Theorem \ref{thm:jacobicontact} is that, once a contact form is fixed, then the identification of $\cM$ with $T^*[1]M \times \R[1]$ is canonical. However, this identification does depend on the choice of contact form, and a different choice will lead to a different (but conformally equivalent) Jacobi structure. This gives a one-to-one correspondence between Jacobi structures up to conformal equivalence and coorientable degree $1$ contact $\N Q$-manifolds. Going farther, we may obtain a correspondence between local Lie algebras and degree $1$ contact $\N Q$-manifolds.
\end{remark}

\section{Symplectization}
Let $\cM$ be a degree $n$ contact $\N$-manifold with fixed contact form $\alpha$. On $\cM \times \R$, one defines a $2$-form $\tilde{\omega} = d(e^t \alpha) = e^t(dt \cdot \alpha + d\alpha)$. Since the coordinate $t$ on $\R$ is of degree $0$, we have that $\tilde{\omega}$ is of degree $n$. The assumptions on $\alpha$ imply that $\tilde{\omega}$ is nondegenerate, so $\cM \times \R$ is a degree $n$ symplectic manifold. The process taking $(\cM, \alpha)$ to $(\cM \times \R, \tilde{\omega})$ is called \emph{symplectization}.

The following lemmas describe the relationship between symplectization and the respective symmetries of the contact and symplectic structures.

\begin{lemma}\label{lemma:ham}
	Let $X \in \vect(\cM)$ be a contact vector field, and let $f$ be the corresponding function in \eqref{eqn:contactvf}.  Then $X - f \pdiff{}{t}$ is a Hamiltonian vector field on $\cM \times \R$, with Hamiltonian function $H_X \defequal e^t(\iota_X \alpha)$.
\end{lemma}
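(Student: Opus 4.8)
The plan is to verify directly that the Hamiltonian vector field associated to $H_X = e^t(\iota_X \alpha)$ on the symplectic manifold $(\cM \times \R, \tilde\omega)$ is exactly $X - f\pdiff{}{t}$, using the defining equation $dH_X = (-1)^{|Y|-1}\iota_Y\tilde\omega$ for the Hamiltonian vector field $Y$. Since the correspondence between functions and Hamiltonian vector fields on a symplectic $\N$-manifold is one-to-one (Roytenberg's results, which we are free to use), it suffices to check the equation for the candidate vector field $Y \defequal X - f\pdiff{}{t}$.

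First I would compute $dH_X$. Writing $H_X = e^t (\iota_X\alpha)$ and noting $t$ has degree $0$, we get $dH_X = e^t\,dt\cdot(\iota_X\alpha) + e^t\,d(\iota_X\alpha)$. The term $d(\iota_X\alpha)$ I would expand using the Cartan formula $\lie_X\alpha = \iota_X d\alpha + (-1)^{|X|}d\iota_X\alpha$, together with the contact condition $\lie_X\alpha = (-1)^{|X|}f\alpha$ from \eqref{eqn:contactvf}; this yields $d(\iota_X\alpha) = (-1)^{|X|}(\lie_X\alpha - \iota_X d\alpha) = f\alpha - (-1)^{|X|}\iota_X d\alpha$. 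So $dH_X = e^t\bigl(dt\cdot(\iota_X\alpha) + f\alpha - (-1)^{|X|}\iota_X d\alpha\bigr)$.

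Next I would compute $\iota_Y\tilde\omega$ with $\tilde\omega = e^t(dt\cdot\alpha + d\alpha)$. Since $\pdiff{}{t}$ contracts $dt$ to $1$ and annihilates $\alpha$ and $d\alpha$, and $X$ annihilates $dt$ (as $X$ has positive degree, or more precisely $X(t)=0$), the contraction splits cleanly: $\iota_X(dt\cdot\alpha) = \pm dt\cdot(\iota_X\alpha)$ with a sign from moving $\iota_X$ past $dt$, $\iota_X(d\alpha) = \iota_X d\alpha$, $\iota_{\pdiff{}{t}}(dt\cdot\alpha) = \pm\alpha$, and $\iota_{\pdiff{}{t}}(d\alpha)=0$. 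Assembling these with the prefactor $e^t$ and the coefficient $-f$ on $\pdiff{}{t}$, then comparing with $(-1)^{|Y|-1}$ times the expression for $dH_X$ above (note $|Y| = |X| = |H_X| - n$), the two should match term by term.

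The main obstacle is bookkeeping the signs correctly: the paper's convention treats $\Omega(\cM\times\R)$ as functions on $T[1](\cM\times\R)$ with the total grading, so $dt$ is an odd (degree $1$, total) object and the Koszul signs when commuting $\iota_X$, $\iota_{\pdiff{}{t}}$ past $dt$ and past each other must be tracked using the stated Cartan relations, in particular $\iota_X\iota_Y = (-1)^{(|X|-1)(|Y|-1)}\iota_Y\iota_X$ and the sign in $\lie_X = \iota_X d + (-1)^{|X|}d\iota_X$. I would organize the computation so that each of the three terms ($dt\cdot(\iota_X\alpha)$, $f\alpha$, and $\iota_X d\alpha$) in $dH_X$ is matched against a corresponding contribution in $(-1)^{|Y|-1}\iota_Y\tilde\omega$, checking the signs once and for all; the $f\alpha$ term in $dH_X$ is precisely what forces the $-f\pdiff{}{t}$ correction term in $Y$, so that cross-check is the heart of the argument. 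Everything else is routine given Lemma \ref{lemma:decomp} and the nondegeneracy of $\tilde\omega$.
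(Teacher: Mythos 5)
Your plan is exactly the paper's proof: compute $dH_X = e^t\bigl(dt \cdot \iota_X \alpha + d\iota_X\alpha\bigr)$ and $\iota_{X - f \pdiff{}{t}} \tilde{\omega} = e^t\bigl( (-1)^{|X|-1} dt \cdot \iota_X \alpha + \iota_X d\alpha - f\alpha \bigr)$, then match the two using the Cartan identity $\lie_X = \iota_X d + (-1)^{|X|}d\iota_X$ and the contact condition \eqref{eqn:contactvf}. You correctly single out the comparison of the $f\alpha$ terms (which is what forces the $-f\pdiff{}{t}$ correction) as the one delicate sign check; the paper's proof leaves exactly that final comparison to the reader, so your outline is a faithful, slightly more explicit version of the same argument.
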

\begin{proof}
	On the one hand, we have that 
\begin{equation*}
d H_X = d(e^t \iota_X \alpha) = e^t(dt \cdot \iota_X \alpha + d \iota_x \alpha).	
\end{equation*}
On the other hand, we have that 
\begin{equation*}
		\iota_{X - f \pdiff{}{t}} \tilde{\omega} = e^t\left( (-1)^{|X|-1} dt \cdot \iota_X \alpha + \iota_X d\alpha - f\alpha \right).
\end{equation*}
The conclusion follows from \eqref{eqn:contactvf} and the identity $\lie_X = \iota_X d + (-1)^{|X|} d \iota_X$.
\end{proof}

\begin{lemma}\label{lemma:hamhom}
	Let $Q \in \vect(\cM)$ be a homological contact vector field, and let $\varphi$ be the degree $1$ function such that $\lie_Q \alpha = -\varphi \alpha$. Then the Hamiltonian vector field $Q - \varphi \pdiff{}{t}$ on $\cM \times \R$ is also homological.
\end{lemma}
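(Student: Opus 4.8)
The plan is to compute the graded self-commutator of $\tilde{Q} \defequal Q - \varphi\pdiff{}{t}$ directly. First observe that Lemma~\ref{lemma:ham}, applied with $X = Q$, already identifies $\tilde{Q}$ as the Hamiltonian vector field of $H_Q = e^t(\iota_Q\alpha)$: since $|Q| = 1$, equation~\eqref{eqn:contactvf} reads $\lie_Q\alpha = -f\alpha$, so the function $f$ there is exactly our $\varphi$. In particular $\tilde{Q}$ has degree $1$, so $\tilde{Q}^2 = \tfrac12[\tilde{Q},\tilde{Q}]$ and it suffices to prove $[\tilde{Q},\tilde{Q}] = 0$.

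Expanding bilinearly, and using that $Q$ and $\varphi\pdiff{}{t}$ are both odd,
\[
[\tilde{Q},\tilde{Q}] = [Q,Q] - 2\bigl[Q,\,\varphi\pdiff{}{t}\bigr] + \bigl[\varphi\pdiff{}{t},\,\varphi\pdiff{}{t}\bigr].
\]
The first term vanishes because $Q$ is homological. In the last term, $\varphi$ is odd, hence $\varphi^2 = 0$, and since $\varphi$ is independent of $t$ one computes $\bigl(\varphi\pdiff{}{t}\bigr)^2 = \varphi^2\frac{\partial^2}{\partial t^2} = 0$, so it vanishes as well. For the middle term, the graded Leibniz rule for the Lie bracket gives $\bigl[Q,\varphi\pdiff{}{t}\bigr] = Q(\varphi)\pdiff{}{t} - \varphi\bigl[Q,\pdiff{}{t}\bigr]$, and $\bigl[Q,\pdiff{}{t}\bigr] = 0$ because the coefficients of $Q$ (a vector field pulled back from $\cM$) do not depend on $t$. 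Thus the whole computation collapses to the single identity $Q(\varphi) = 0$.

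This identity is the only non-formal point, and it is precisely where $Q^2 = 0$ enters. Applying $\lie_Q$ to the defining relation $\lie_Q\alpha = -\varphi\alpha$, and using the graded Leibniz rule together with $\varphi^2 = 0$, yields $\lie_Q\lie_Q\alpha = -Q(\varphi)\,\alpha$; on the other hand $\lie_Q\lie_Q = \tfrac12\lie_{[Q,Q]} = 0$. Since $\alpha$ is nowhere vanishing, $Q(\varphi) = 0$, and the proof is complete. The only real care required is bookkeeping of signs in the graded Leibniz manipulations; no deeper input is needed.
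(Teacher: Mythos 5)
Your proposal is correct and follows essentially the same route as the paper: the key step in both is to derive $Q(\varphi)=0$ by applying $\lie_Q$ to $\lie_Q\alpha = -\varphi\alpha$ (using $\varphi^2=0$ and $\lie_{Q^2}=0$), after which the vanishing of $\bigl(Q-\varphi\pdiff{}{t}\bigr)^2$ is a short formal computation. Your expansion of the self-commutator is just a more explicit version of the paper's one-line final step.
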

\begin{proof}
	On the one hand, $(\lie_Q)^2 \alpha = \lie_Q (-\varphi \alpha) = -(Q (\varphi)) \alpha$. In the last step, we have used the fact that $\varphi^2 = 0$. On the other hand, $(\lie_Q)^2 \alpha = \lie_{Q^2} \alpha = 0$, since $Q$ is homological. It follows that $Q(\varphi) = 0$.

     Now, we can directly see that $(Q - \varphi \pdiff{}{t})^2 = Q^2 - Q(\varphi) \pdiff{}{t} = 0$.
\end{proof}
Together, Lemmas \ref{lemma:ham} and \ref{lemma:hamhom} give the following result.
\begin{thm}\label{thm:symplectization}
	The symplectization process takes contact $\N Q$-manifolds with fixed contact form to symplectic $\N Q$-manifolds.
\end{thm}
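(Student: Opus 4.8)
The plan is to build the homological vector field on the symplectization directly out of Lemmas~\ref{lemma:ham} and~\ref{lemma:hamhom}; essentially all of the analytic work has already been packaged into those two lemmas, so what remains is bookkeeping. Let $(\cM,\alpha,Q)$ be a degree $n$ contact $\N Q$-manifold with fixed contact form, and recall from the discussion above that the symplectization $(\cM\times\R,\tilde\omega)$, with $\tilde\omega = d(e^t\alpha)$, is already known to be a degree $n$ symplectic $\N$-manifold. Since $Q$ is a contact vector field of degree $1$, equation~\eqref{eqn:contactvf} supplies a function $\varphi$ with $\lie_Q\alpha = -\varphi\alpha$, and comparing degrees on both sides shows $|\varphi| = 1$; this $\varphi$ is precisely the datum fed into Lemma~\ref{lemma:hamhom}.

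Next I would set $\tilde Q \defequal Q - \varphi\,\pdiff{}{t}$ and verify the three things required of a homological symplectic vector field. First, Lemma~\ref{lemma:ham}, applied with $X = Q$ and $f = \varphi$, identifies $\tilde Q$ as the Hamiltonian vector field of $H_Q \defequal e^t(\iota_Q\alpha)$; in particular $\tilde Q$ preserves $\tilde\omega$. Second, Lemma~\ref{lemma:hamhom} gives $\tilde Q^2 = 0$. Third, $\tilde Q$ has degree $1$: both $Q$ and $\varphi$ are of degree $1$ and $\pdiff{}{t}$ is of degree $0$, so each summand of $\tilde Q$ has degree $1$ (equivalently, by Proposition~\ref{prop:contactvf} the function $\iota_Q\alpha$, hence also $H_Q$, has degree $|Q|+n = n+1$, and a Hamiltonian vector field of that degree has degree $(n+1)-n = 1$). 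Hence $(\cM\times\R,\tilde\omega,\tilde Q)$ is a symplectic $\N Q$-manifold, and $(\cM,\alpha,Q)\mapsto(\cM\times\R,\tilde\omega,\tilde Q)$ is the claimed assignment.

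I do not expect any genuine obstacle, since the two lemmas were evidently arranged so that this statement falls out immediately; the proof is really just the observation that Lemmas~\ref{lemma:ham} and~\ref{lemma:hamhom} combine. The one point I would make explicit is that $\tilde Q$ is not merely \emph{some} homological vector field but the Hamiltonian one attached to $e^t(\iota_Q\alpha)$, so that ``the symplectization of $Q$'' is itself a canonical object, compatible with the Hamiltonian description of symmetries of symplectic $\N$-manifolds. Combined with Theorem~\ref{thm:split} and Corollary~\ref{cor:correspondence}, this is precisely the route along the right-hand edge of diagram~\eqref{diag:poissonization} (and its higher-degree analogues), which is what is needed in order to compare symplectization with Poissonization in the $n=1$ case.
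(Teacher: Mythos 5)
Your proof is correct and follows exactly the paper's route: the paper's entire proof of Theorem \ref{thm:symplectization} is the one-line observation that Lemmas \ref{lemma:ham} and \ref{lemma:hamhom} combine, and you have simply spelled out the bookkeeping (the identification $f=\varphi$, the degree count, and the fact that $\tilde Q$ is the Hamiltonian vector field of $e^t(\iota_Q\alpha)$). Nothing further is needed.
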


\subsection{Poissonization}

We now return to the case $n=1$, where $\cM = T^*[1]M \times \R[1]$, with the standard contact form $\lambda + d\theta$, where $\lambda$ is the Liouville $1$-form on $T^*[1]M$. The symplectization process gives $T^*[1]M \times \R[1] \times \R$, with the symplectic form $\tilde{\omega} = e^t (dt (d\theta + \lambda) + \omega)$, where $\omega$ is the canonical symplectic form on $T^*[1]M$. 

Given a Jacobi structure $(\Lambda, R)$ on $M$, we have a homological contact vector field $Q$, given by \eqref{eqn:q}. Lemmas \ref{lemma:ham} and \ref{lemma:hamhom} tell us that $Q$ induces a homological Hamiltonian vector field on the symplectization $T^*[1]M \times \R[1] \times \R$, with Hamiltonian function $H_Q = e^t(\iota_Q \alpha) = e^t (\Lambda + \theta R)$; here, we have used \eqref{eqn:iqalpha}.

In order to realize $H_Q$ as a bivector field on $M \times \R$, we need to transform $\tilde{\omega}$ into the canonical symplectic form $\omega + dt d\theta$, arising from the obvious identification of $T^*[1]M \times \R[1] \times \R$ with $T^*[1](M \times \R)$.

Consider the diffeomorphism $\xi$ of $T^*[1]M \times \R[1] \times \R$, given by
\begin{equation*}
	\xi^* f = \exp(t\varepsilon)(f) = e^{|f|t} f
\end{equation*}
for any homogeneous function $f$. Using the power series expansion of the exponential, we can see that, for any homogeneous differential form $\beta$,
\begin{equation*}
	\xi^* \beta = \exp(\lie_{t\varepsilon})(\beta) = e^{|\beta|t} \left(\beta + dt \iota_\varepsilon \beta \right).
\end{equation*}
In particular, $\xi^* \omega = e^t(\omega + dt \cdot \lambda)$, and $\xi^*(dt d\theta) = e^t dt d\theta$. Thus, $\xi^*(\omega + dtd\theta) = \tilde{\omega}$. In other words:
\begin{prop}
	The diffeomorphism $\xi$ relates the symplectic form $\tilde{\omega}$ with the canonical symplectic form on $T^*[1](M \times \R)$.
\end{prop}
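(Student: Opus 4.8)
The plan is to prove the stronger, explicit statement that $\xi^*(\omega + dt\,d\theta) = \tilde\omega$, reorganizing the computations immediately preceding the proposition into a self-contained argument. The strategy has three stages: confirm that $\xi$ is genuinely a diffeomorphism of $T^*[1]M \times \R[1] \times \R$; pin down the action of $\xi^*$ on differential forms; and then substitute $\omega$ and $dt\,d\theta$.

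First I would check that $\xi$ is a diffeomorphism and not merely an algebra endomorphism. The assignment $f \mapsto e^{|f|t}f$ is $\R$-linear on homogeneous components and multiplicative, since degrees add and $e^{|fg|t} = e^{|f|t}e^{|g|t}$; it is the identity in degree $0$; and it admits the inverse $f \mapsto e^{-|f|t}f$. In adapted local coordinates it fixes the degree-$0$ coordinates and $t$, and rescales each coordinate of positive weight $d_a$ by $e^{d_a t}$, so the induced map is visibly invertible. (It matters here that $t$ is an honest real coordinate, so that the series for $\exp(t\varepsilon)$ converges to a bona fide smooth function.)

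Next I would determine $\xi^*$ on $\Omega(\cM)$. Because $\xi^*$ is an algebra automorphism commuting with $d$, and $\Omega(\cM)$ is generated over functions by exact $1$-forms, it is enough to compute $\xi^*(df)$: applying $d$ to $\xi^*f = e^{|f|t}f$ and using $\iota_\varepsilon df = \varepsilon(f) = |f|f$ gives $\xi^*(df) = e^{|f|t}(df + dt\,\iota_\varepsilon df)$, and extending multiplicatively yields the formula $\xi^*\beta = e^{|\beta|t}(\beta + dt\,\iota_\varepsilon \beta)$ recorded in the text (the cross term comes out correctly since $\iota_\varepsilon$ is an odd derivation, and the quadratic term drops because it contains $dt \wedge dt = 0$). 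Then I substitute: by \eqref{eqn:alphaomega} with $n=1$ we have $\iota_\varepsilon \omega = \lambda$, so $\xi^*\omega = e^t(\omega + dt\cdot\lambda)$; and since $\iota_\varepsilon dt = 0$ while $\iota_\varepsilon d\theta = \theta$, the form $\iota_\varepsilon(dt\,d\theta)$ is a multiple of $\theta\,dt$, whence $dt \wedge \iota_\varepsilon(dt\,d\theta) = 0$ and $\xi^*(dt\,d\theta) = e^t\,dt\,d\theta$ (noting that $dt\,d\theta$ has degree $1$). Adding these gives $\xi^*(\omega + dt\,d\theta) = e^t\bigl(\omega + dt\,(\lambda + d\theta)\bigr) = e^t(dt\cdot\alpha + d\alpha) = \tilde\omega$, using $\alpha = \lambda + d\theta$ and $d\alpha = \omega$.

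The only step that is more than sign-bookkeeping is the passage from the description of $\xi^*$ on functions to the closed formula on forms, i.e. recognizing that $\xi^*$ coincides with $\exp(\lie_{t\varepsilon})$ on $\Omega(\cM)$ and justifying its explicit shape by an induction on the number of factors in a product of exact forms. Everything after that is a short computation with the Cartan calculus and the sign conventions fixed in the introduction.
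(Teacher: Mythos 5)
Your proposal is correct and follows essentially the same route as the paper: establishing the formula $\xi^*\beta = e^{|\beta|t}(\beta + dt\,\iota_\varepsilon\beta)$ and then computing $\xi^*\omega = e^t(\omega + dt\cdot\lambda)$ and $\xi^*(dt\,d\theta) = e^t\,dt\,d\theta$, which sum to $\tilde\omega$. The explicit verification that $\xi$ is invertible and the careful derivation of the pullback formula on forms are welcome elaborations of steps the paper leaves implicit, but they do not change the argument.
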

Since $H_Q$ is of degree $2$, its push-forward by $\xi$ is
\begin{equation*}
	\xi_* H_Q = e^{-t}(\Lambda + \theta R),
\end{equation*}
which exactly correponds to the bivector field for the Poissonization of the Jacobi structure $(\Lambda, R)$ (since $\theta$ is the conjugate variable that plays the role of $\pdiff{}{t}$). Thus we have shown the following.
\begin{thm}
	The diagram \eqref{diag:poissonization} commutes.
\end{thm}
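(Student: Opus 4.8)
The plan is to chase an arbitrary Jacobi manifold $(M,\Lambda,R)$ around both sides of the square \eqref{diag:poissonization} and identify the two resulting degree $1$ symplectic $\N Q$-manifolds; essentially all of the work has already been done in the preceding discussion, so the proof amounts to assembling the pieces. First I would go across and then down: by Theorem \ref{thm:jacobicontact}, $(M,\Lambda,R)$ corresponds to the degree $1$ contact $\N Q$-manifold $(T^*[1]M\times\R[1],\,\lambda+d\theta,\,Q)$ with $Q$ given by \eqref{eqn:q}, and by Theorem \ref{thm:symplectization} (i.e.\ Lemmas \ref{lemma:ham} and \ref{lemma:hamhom}) its symplectization is $(T^*[1]M\times\R[1]\times\R,\,\tilde\omega)$ carrying the homological Hamiltonian vector field of $H_Q=e^t\iota_Q\alpha=e^t(\Lambda+\theta R)$, where the last equality is \eqref{eqn:iqalpha}.

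Next I would go down and then across. I would recall the Poissonization of Lichnerowicz, which sends $(M,\Lambda,R)$ to the Poisson manifold $M\times\R$ with bivector $\pi=e^{-t}\bigl(\Lambda+\pdiff{}{t}\wedge R\bigr)$, and then apply the correspondence of Roytenberg \cite{roytenberg:graded} between Poisson manifolds and degree $1$ symplectic $\N Q$-manifolds: this sends $(M\times\R,\pi)$ to $(T^*[1](M\times\R),\,\omega+dt\,d\theta)$ equipped with the Hamiltonian vector field of $\pi$, now regarded as a degree $2$ fiberwise-polynomial function on $T^*[1](M\times\R)$. Under the identification $T^*[1](M\times\R)=T^*[1]M\times\R[1]\times\R$, with $\theta$ the degree $1$ momentum conjugate to $t$, that function is exactly $e^{-t}(\Lambda+\theta R)$.

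The two composites therefore live on the same underlying graded manifold, but a priori with the two different symplectic forms $\tilde\omega$ and $\omega+dt\,d\theta$. The last step is to invoke the diffeomorphism $\xi$ constructed above: the immediately preceding proposition gives $\xi^*(\omega+dt\,d\theta)=\tilde\omega$, so $\xi$ is a symplectomorphism between the two symplectic $\N$-manifolds, and the identity $\xi_*H_Q=e^{-t}(\Lambda+\theta R)$ established above shows that it carries the homological Hamiltonian of the first composite to that of the second. Since the Roytenberg correspondence---like the one of Theorem \ref{thm:jacobicontact}---is completely determined by the homological Hamiltonian, $\xi$ is an isomorphism of degree $1$ symplectic $\N Q$-manifolds intertwining the two routes around \eqref{diag:poissonization}, which is the assertion.

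Almost all of this is bookkeeping; the genuine analytic input is in the earlier lemmas. The one point I would take care over is pinning the identifications down precisely: that fixing $\alpha$ makes $\cM\iso T^*[1]M\times\R[1]$ canonical (Example \ref{example:deg1}), that the degree $1$ coordinate $\theta$ really is the fiber coordinate of $T^*[1]\R\subseteq T^*[1](M\times\R)$ produced by Poissonization, and---most importantly---that the factors of $e^{\pm t}$ and the placement of $\theta$ in the definition of Poissonization agree on the nose with what is forced by \eqref{eqn:q} and $H_Q=e^t\iota_Q\alpha$. (That $\xi$ is a globally well-defined diffeomorphism is routine: it is the time-one flow of $t\varepsilon$.) So I expect the main obstacle to be not any hard step but simply matching sign and scaling conventions so that ``corresponds to the bivector field for the Poissonization'' is a literal equality rather than an equality up to sign or scale.
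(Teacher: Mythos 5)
Your proposal is correct and follows essentially the same route as the paper: the paper's proof of this theorem is precisely the preceding computation of $H_Q = e^t(\Lambda+\theta R)$, the diffeomorphism $\xi$ with $\xi^*(\omega+dt\,d\theta)=\tilde\omega$, and the identification $\xi_*H_Q = e^{-t}(\Lambda+\theta R)$ with the Poissonization bivector, which you reassemble in the same order. Your closing caution about matching the sign and scaling conventions of Lichnerowicz's Poissonization is exactly the one point the paper leaves implicit.
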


\bibliographystyle{amsalpha}
\bibliography{contactbib}

\end{document}